\newtheorem{lemma}{Lemma}
\newtheorem{theorem}{Theorem}
\newtheorem{corollary}{Corollary}
\title{On a Generalization of the Frobenius Number}
\author{Alexander Brown \and Eleanor Dannenberg \and Jennifer Fox \and Joshua Hanna \and  Katherine Keck \and Alexander Moore \and Zachary Robbins \and Brandon Samples \and James Stankewicz}
\email{stankewicz@gmail.com}
\address{Department of Mathematics, University of Georgia, Athens, GA 30602}
\begin{document}

\begin{abstract}

We consider a generalization of the Frobenius Problem where the object of interest is the greatest integer which has exactly $j$ representations by a collection of positive relatively prime integers. We prove an analogue of a theorem of Brauer and Shockley and show how it can be used for computation.

\end{abstract}

\maketitle

The \emph{linear diophantine problem of Frobenius} has long been a celebrated problem in number theory. Most simply put, the problem is to find the \emph{Frobenius Number} of $k$ positive relatively prime integers $(a_1, \ldots, a_k)$, i.e., the greatest integer $M$ for which there is no way to express $M$ as the non-negative integral linear combination of the given $a_i$.

A generalization, which has drawn interest both from classical study of the Frobenius Problem (\cite[Problem A.2.6]{Alf}) and from the perspective of partition functions and integer points in polytopes (as in Beck and Robins \cite{1}), is to ask for the greatest integer $M$ which can be expressed in exactly $j$ different ways. We make this precise with the following definitions:

A \emph{representation} of $M$ by a $k$-tuple $(a_1, \ldots, a_k)$ of non-negative, relatively prime integers is a solution $(x_1, \ldots, x_k)\in \mathbb{Z}^k_{\ge 0}$ to the equation $M = \sum_{i=1}^k a_ix_i$.

We define the $j$-\emph{Frobenius Number} of a $k$-tuple $(a_1, \ldots, a_k)$ of relatively prime positive integers to be the greatest integer $M$ with exactly $j$ representations of $M$ by $(a_1, \ldots, a_k)$ if such a positive integer exists and zero otherwise. We refer to this quantity as $g_j(a_1, \ldots, a_k)$.

Finally we define $f_j(a_1, \ldots, a_k)$ exactly as we defined $g_j(a_1, \ldots, a_k)$, except that we consider only \emph{positive representations} $(x_1, \ldots, x_k)\in \mathbb{Z}^k_{> 0}$.

Note that the $0$-Frobenius of $(a_1, \ldots, a_k)$ is the classical Frobenius Number. The purpose of this paper is to show the following generalization of a result of Brauer and Shockley \cite{2} on the classical Frobenius Number.

\begin{theorem}\label{bigresult} If $d=\gcd(a_2, \ldots, a_k)$ and $j\ge 0$, then either

$$g_j(a_1, a_2, \ldots,a_k) = d\cdot g_j(a_1, \frac{a_2}{d}, \ldots, \frac{a_k}{d}) + (d-1)a_1$$

or $g_j(a_1,a_2, \ldots,a_k) = g_j(a_1, \frac{a_2}{d}, \ldots, \frac{a_k}{d}) = 0.$\end{theorem}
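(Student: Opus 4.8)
The plan is to push the whole statement onto a single representation‑counting bijection. Set $b_i=a_i/d$ for $2\le i\le k$, so $\gcd(b_2,\dots,b_k)=1$, and note that since $\gcd(a_1,\dots,a_k)=1$ we also have $\gcd(a_1,d)=\gcd\bigl(a_1,\gcd(a_2,\dots,a_k)\bigr)=1$. The key lemma I would establish is this: given $M\ge 0$, let $r$ be the unique element of $\{0,1,\dots,d-1\}$ with $a_1r\equiv M\pmod d$. If $M<a_1r$, then $M$ has no representation by $(a_1,\dots,a_k)$; and if $M\ge a_1r$, then
\[
(x_1,x_2,\dots,x_k)\longmapsto\Bigl(\tfrac{x_1-r}{d},\,x_2,\dots,x_k\Bigr)
\]
is a bijection from the representations of $M$ by $(a_1,a_2,\dots,a_k)$ onto the representations of $M':=(M-a_1r)/d$ by $(a_1,b_2,\dots,b_k)$, with inverse $(y_1,x_2,\dots,x_k)\mapsto(r+dy_1,x_2,\dots,x_k)$. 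This is essentially forced: reducing $M=\sum_i a_ix_i$ modulo $d$ gives $a_1x_1\equiv M\pmod d$, hence $x_1\equiv r\pmod d$, so $x_1-r$ is a nonnegative multiple of $d$; dividing $M-a_1r=d\bigl(a_1(x_1-r)/d+\sum_{i\ge 2}b_ix_i\bigr)$ through by $d$ yields the correspondence, and every step is reversible.

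Reading the lemma backwards, each pair $(r,M')$ with $0\le r\le d-1$ and $M'\ge 0$ gives $M=dM'+a_1r$ having exactly as many representations by $(a_1,\dots,a_k)$ as $M'$ has by $(a_1,b_2,\dots,b_k)$, and conversely each $M$ with at least one representation comes from a unique such pair. Hence, for $j\ge 1$, the positive integers with exactly $j$ representations by $(a_1,\dots,a_k)$ are precisely the positive numbers $dM'+a_1r$ with $0\le r\le d-1$ and with $M'\ge 0$ having exactly $j$ representations by $(a_1,b_2,\dots,b_k)$. I would then split on whether $g_j(a_1,b_2,\dots,b_k)$ is zero. If it equals some $G'\ne 0$, then $G'$ is the largest $M'$ with exactly $j$ representations, so $dM'+a_1r$ is maximized at $M'=G'$, $r=d-1$, giving $g_j(a_1,\dots,a_k)=dG'+(d-1)a_1$: the first alternative. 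If instead $g_j(a_1,b_2,\dots,b_k)=0$ and $j\ge 2$, then no $M'\ge 0$ has exactly $j$ representations (the value $M'=0$ always has exactly one), so no $M$ does either and both sides of the identity vanish. If $g_j(a_1,b_2,\dots,b_k)=0$ and $j=1$, then $M'=0$ is the only $M'$ with a unique representation, so the integers with a unique representation by $(a_1,\dots,a_k)$ are exactly $0,a_1,2a_1,\dots,(d-1)a_1$, whose largest positive member is $(d-1)a_1=d\cdot 0+(d-1)a_1$, and the first alternative again holds (trivially when $d=1$).

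The step I expect to be the real obstacle is the case $j=0$. Here the lemma must be combined with the non‑representable $M$ satisfying $M<a_1r(M)$ — all of which are smaller than $a_1(d-1)$, hence harmless as long as $g_0(a_1,b_2,\dots,b_k)\ne 0$ — and with the degenerate possibility $g_0(a_1,b_2,\dots,b_k)=0$, which forces $1\in\{a_1,b_2,\dots,b_k\}$. When $a_1=1$ no positive integer is omitted by $(a_1,\dots,a_k)$ either, so the second alternative holds; in the remaining situation (some $b_i=1$ while $a_1\ge 2$) one must track precisely which residues modulo $d$ stay non‑representable and reconcile the outcome with the convention that records a Frobenius number with no positive omission as $0$ rather than as the classical $-1$. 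Handling that bookkeeping correctly — not the bijection, which is routine — is where the care lies, and it is also the point at which the $j=0$ case recovers the original Brauer--Shockley recursion.
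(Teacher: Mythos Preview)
Your argument is correct and takes a genuinely different route from the paper's.  The paper never works directly with $g_j$: it introduces the positive–representation analogue $f_j$, proves (via a preparatory lemma that $f_j$ admits a positive representation with $a_1$–coefficient equal to~$1$) the clean identity $f_j(a_1,a_2,\dots,a_k)=d\cdot f_j(a_1,a_2/d,\dots,a_k/d)$, and then transfers to $g_j$ through the shift $f_j=g_j+\sum_i a_i$.  Your residue bijection bypasses all of this: it gives, for every $M$, an equality of representation counts between $M$ under $(a_1,\dots,a_k)$ and $M'=(M-a_1r)/d$ under $(a_1,b_2,\dots,b_k)$, and the theorem drops out by maximizing $dM'+a_1r$.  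Your way is more direct and yields the pointwise count identity, not just the extremal value; the paper's way has the minor aesthetic advantage that the $f_j$ identity carries no additive correction and that its Lemma~1 is of some independent interest.

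Your instinct about the $j=0$ boundary case is sharper than you give yourself credit for.  The paper's transfer lemma asserts $f_j=0\Longleftrightarrow g_j=0$, but for $j=0$ only the forward implication holds: with $(a_1,a_2)=(2,3)$ and $d=3$ one has $g_0(2,1)=0$ under the paper's convention while $f_0(2,1)=2$, and indeed $g_0(2,3)=1$ matches neither $d\cdot 0+(d-1)a_1=4$ nor~$0$.  So the ``bookkeeping'' you anticipate cannot in fact be completed for the theorem exactly as stated; the discrepancy vanishes if one uses the classical convention $g_0=-1$ (rather than $0$) when $1$ lies among the generators, and under that convention your bijection handles all $j\ge 0$ uniformly, with no case split needed.
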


The research done in this note was completed as part of the undergraduate number theory VIGRE research seminar directed by Professor Dino Lorenzini and assisted by graduate students Brandon Samples and James Stankewicz at the University of Georgia in Fall 2008. Support for the seminar was provided by the Mathematics Department's NSF VIGRE grant.

\begin{lemma}\label{lemma}

If $f_j(a_1, \ldots, a_k)$ is nonzero, there exist integers $x_2, \ldots, x_k > 0$ such that $$f_j(a_1, \ldots, a_k) = \sum_{i=2}^k a_i x_i.$$

\end{lemma}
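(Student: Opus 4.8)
The plan is to exploit the maximality built into the definition of $f_j$ by comparing the positive representations of $M := f_j(a_1,\ldots,a_k)$ with those of $M + a_1$. Since $M$ is assumed nonzero, it is by definition the \emph{largest} positive integer admitting exactly $j$ positive representations; as $a_1 > 0$ we have $M + a_1 > M$, and therefore $M + a_1$ does \emph{not} have exactly $j$ positive representations.

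Next I would introduce the evident injection $(x_1, x_2, \ldots, x_k) \mapsto (x_1 + 1, x_2, \ldots, x_k)$, which carries each positive representation of $M$ to a positive representation of $M + a_1$. Its image is precisely the set of positive representations of $M + a_1$ whose first coordinate is at least $2$. Since every positive representation has first coordinate $\ge 1$, it follows that the number of positive representations of $M + a_1$ equals $j$ plus the number of positive representations of $M + a_1$ whose first coordinate equals exactly $1$.

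Because this total differs from $j$, there must exist at least one positive representation of the shape $(1, y_2, \ldots, y_k)$ of $M + a_1$, that is, $M + a_1 = a_1 + \sum_{i=2}^k a_i y_i$ with $y_2, \ldots, y_k > 0$. Subtracting $a_1$ from both sides yields $M = \sum_{i=2}^k a_i y_i$ with all $y_i > 0$, which is exactly the assertion of the lemma.

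I do not anticipate a serious obstacle here; the only point requiring care is the bookkeeping for the injection—verifying that its image is exactly the positive representations of $M + a_1$ with first coordinate $\ge 2$—together with the observation that the degenerate case $j = 0$ is handled uniformly (there the argument simply produces \emph{some} positive representation of $M + a_1$, which perforce has first coordinate $1$).
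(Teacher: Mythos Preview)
Your proposal is correct and follows essentially the same approach as the paper: both compare the positive representations of $M=f_j$ with those of $M+a_1$, use maximality to force an extra representation of $M+a_1$, and then locate one whose first coordinate equals $1$. Your formulation via the explicit injection $(x_1,\ldots,x_k)\mapsto(x_1+1,\ldots,x_k)$ and the partition of representations of $M+a_1$ by whether $x_1=1$ or $x_1\ge 2$ is a tidy repackaging of the paper's counting, but the underlying argument is the same.
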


\begin{proof}Let $f_j := f_j(a_1, \ldots, a_k)$. By the definition of $f_j$, we can write $f_j = \displaystyle\sum_{i=1}^k a_i x_{i,\ell}$ with $x_{i,\ell} > 0$ for $1 \leq \ell \leq j$.  Since
$$f_j + a_1 = \displaystyle \sum_{i=1}^k a_i x_{i,\ell} + a_1= a_1(x_{1,\ell} + 1) + \displaystyle \sum_{i=2}^k a_i x_{i,\ell},$$
we obtain at least $j$ positive representations of $f_j + a_1$.  As $f_j$ is the largest number with exactly $j$ positive representations,  there must be at least $j+1$ distinct ways to represent $f_j + a_1$.  Specifically, we have $f_j + a_1 = \displaystyle \sum_{i=1}^k a_i x'_{i,\ell}$ with $x'_{i,\ell} > 0$ for all $1 \leq \ell \leq j+1$.  Subtract $a_1$ from both sides of these $j+1$ equations to obtain $f_j = (x'_{1,{\ell}} - 1)a_1 + \displaystyle \sum_{i=2}^k a_i x'_{i,\ell}$.  Evidently, there exists some $\ell_0 \in [1, j+1]$ for which $x'_{1,{\ell_0}} - 1 = 0$ because $f_j$ cannot have $j+1$ positive representations.  Therefore, $f_j(a_1, \ldots, a_k) = \displaystyle \sum_{i=2}^k a_i x'_{i,{\ell_0}}.$\end{proof}

\begin{theorem}\label{theorem}

If $\mathrm{gcd}(a_2,\ldots,a_k)=d$, then $$f_j(a_1,a_2,\ldots,a_k)=d\cdot f_j(a_1,\dfrac{a_2}{d},\ldots,\dfrac{a_k}{d}).$$ 

\end{theorem}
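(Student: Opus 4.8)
The plan is to reduce everything to a single counting identity relating multiples of $d$ for the large tuple to arbitrary integers for the reduced one. Write $b_i = a_i/d$ for $2 \le i \le k$. Since $\gcd(a_1,a_2,\ldots,a_k)=1$ we get $\gcd(a_1,d)=1$ and $\gcd(b_2,\ldots,b_k)=1$, so both numbers in the statement are defined. For a positive integer $L$ let $\rho(L)$ be the number of positive representations of $L$ by $(a_1,a_2,\ldots,a_k)$ and $\rho'(L)$ the number of positive representations of $L$ by $(a_1,b_2,\ldots,b_k)$. The core claim I would establish is that $\rho(dN)=\rho'(N)$ for every positive integer $N$, via an explicit bijection: given a positive representation $(t,x_2,\ldots,x_k)$ of $N$ by $(a_1,b_2,\ldots,b_k)$, scaling the first coordinate yields $(dt,x_2,\ldots,x_k)$, a positive representation of $dN$ by $(a_1,a_2,\ldots,a_k)$; conversely, in any positive representation $(x_1,\ldots,x_k)$ of $dN$ by $(a_1,\ldots,a_k)$ the identity $a_1x_1=d\bigl(N-\sum_{i=2}^{k}b_ix_i\bigr)$ forces $d\mid a_1x_1$, hence $d\mid x_1$ because $\gcd(a_1,d)=1$, and dividing the first coordinate by $d$ inverts the scaling map.

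Next I would use Lemma~\ref{lemma} to control divisibility. If $M_0:=f_j(a_1,a_2,\ldots,a_k)\ne0$, the lemma gives $M_0=\sum_{i=2}^{k}a_ix_i=d\sum_{i=2}^{k}b_ix_i$, so $d\mid M_0$. Combining this with the core claim, I would check that $M_0=0$ if and only if $N_0:=f_j(a_1,b_2,\ldots,b_k)=0$: if $N_0\ne0$ then $N_0$ has exactly $j$ positive representations, so $dN_0$ has exactly $j$ positive representations by $(a_1,\ldots,a_k)$, whence $M_0\ge dN_0>0$; and if $M_0\ne0$, writing $M_0=dN$ with $N\ge1$, the core claim shows $N$ has exactly $j$ positive representations by $(a_1,b_2,\ldots,b_k)$, so $N_0\ne0$. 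When both vanish the asserted equality is just $0=d\cdot0$.

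Finally, with $M_0$ and $N_0$ both nonzero, I would conclude as follows. Write $M_0=dN$ with $N\ge1$ (Lemma~\ref{lemma}); then $\rho'(N)=\rho(dN)=\rho(M_0)=j$, so maximality of $N_0$ forces $N\le N_0$, i.e.\ $M_0\le dN_0$. Conversely $\rho(dN_0)=\rho'(N_0)=j$, so maximality of $M_0$ forces $dN_0\le M_0$; hence $M_0=dN_0$, which is the theorem. The step that genuinely requires care is the passage from the core claim to the statement about $f_j$: the identity $\rho(dN)=\rho'(N)$ only compares multiples of $d$ on the left with arbitrary integers on the right, so a priori some $M>dN_0$ with $d\nmid M$ could have exactly $j$ positive representations. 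Lemma~\ref{lemma} is exactly what removes this worry, since it forces the extremal $M_0$ to be a multiple of $d$. (Alternatively one can extend the core claim to a relation $\rho(M)=\rho'(M')$ valid for every $M$, with $M'\ge M/d$ obtained by a congruence shift in the $x_1$-coordinate, but invoking Lemma~\ref{lemma} is shorter.)
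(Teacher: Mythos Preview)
Your proof is correct and follows essentially the same approach as the paper: both use Lemma~\ref{lemma} to force $d\mid f_j(a_1,\ldots,a_k)$, and both rest on the same bijection (scale the first coordinate by $d$, using $\gcd(a_1,d)=1$) between positive representations of $dN$ by $(a_1,a_2,\ldots,a_k)$ and of $N$ by $(a_1,b_2,\ldots,b_k)$. Your packaging of that bijection as the single identity $\rho(dN)=\rho'(N)$, followed by two applications of maximality, is a tidier organization of the paper's three-step argument, but the mathematical content is the same.
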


\begin{proof}Let $a_i=da'_i$ for $i=2,\ldots,k$ and $N=f_j(a_1,\ldots,a_k)$.\\
Assuming $N>0$, we know by Lemma \ref{lemma} that $$N=\sum_{i=2}^k a_i x_i=d\sum_{i=2}^k a'_i x_i$$ with $x_i>0$. Let $N'=\sum_{i=2}^k a'_i x_i$. We want to show that $N'=f_j(a_1,a'_2,\ldots,a'_k)$ and will do this in three steps.\\ \\
\textbf{Step 1:}  First, we know that $N'$ does not have $j+1$ or more positive representations by   $a_1,a'_2,\ldots,a'_k$.  If $N'$ could be so represented, then for $1\leq l\leq j+1$ we would have
$$N'=a_1y_{{1,\ell}} + \sum_{i=2}^k a'_i y_{{i,\ell}}.$$
Multiplying this equation by $d$ immediately produces too many representations of $N$ and thus a contradiction.\\ \\
\textbf{Step 2:} Next, we know that
$$f_j(a_1,\ldots, a_k) =N=a_1x_{{1,\ell}} + \sum_{i=2}^k a_i x_{{i,\ell}}$$ 
for $1\leq l\leq j$ and $x_i>0$, so
$$\dfrac{N}{d}=\dfrac{a_1x_{{1,\ell}}}{d} + \sum_{i=2}^k \dfrac{a_i x_{{i,\ell}}}{d}.$$
Since $d|N$ and $d|a_i$ for $i\ge 2$, we must have $d|a_1x_{1,\ell}$ for $1\le \ell \le j$. In addition, $\mathrm{gcd}(a_1,d)=1$ so we must have $d|x_{1,\ell}$ for $1\le \ell \le j$. So
$$ N'=a_1 \dfrac{x_{1,\ell}}{d} + \sum_{i=2}^k a'_i x_{{i,\ell}},$$
hence $N'$ has at least $j$ distinct positive representations.  But we have already shown that $N'$ cannot have $j+1$ or more positive representations, thus $N'$ has exactly $j$ positive representations.\\ \\
\textbf{Step 3:}  Finally we will show that $N'$ is the largest number with exactly $j$ positive representations by $a_1, a_2', \dots, a_k'$.  Consider any $n>N'$.  Since $dn>dN'=N$, we know that $dn$ can be represented as a linear combination of $a_1,\ldots,a_k$ in exactly $X$ ways with $X\not=j$.\\
Thus, for $1\leq l\leq X$ and $X\not=j$ we have
$$dn=a_1x_{{1,\ell}} + \sum_{i=2}^k a_i x_{{i,\ell}}$$
and as in Step 2,
$$n=a_1(\dfrac{x_{{1,\ell}}}{d}) + \sum_{i=2}^k a'_i x_{{i,\ell}}.$$
If $X>j$ then we certainly do not have exactly $j$ representations, so assume $X<j$. Assume now that we can write $n=a_1y_1 + \sum_{i=2}^k a'_i y_i$
where $y_i\not=x_{i,\ell}$ for any such $\ell$.  By multiplying by $d$ we get a new representation for $dn$, which is a contradiction because $dn$ is represented in exactly $X\not=j$ ways.  
\\\\
Therefore $N'$ is the greatest number with exactly $j$ positive representations and so 
$$N'=f_j(a_1,a'_2,\ldots,a'_k).$$
Thus
$$f_j(a_1,a_2,\ldots,a_k)=d\cdot f_j(a_1,\dfrac{a_2}{d},\ldots,\dfrac{a_k}{d}).$$\end{proof}

Having established our results about $f_j(a_1, \ldots, a_k)$, we show that we can translate these results to results about the $j$-Frobenius Numbers.

\begin{lemma}\label{lemma2}

Either $f_j(a_1, \ldots, a_k) = g_j(a_1, \ldots, a_k)=0$ or, $$f_j(a_1, \ldots, a_k) = g_j(a_1, \ldots, a_k) + \displaystyle \sum_{i=1}^k a_i.$$

\end{lemma}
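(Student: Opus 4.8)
The plan is to exploit the obvious bijection between positive representations of $M$ and non-negative representations of $M - \sum_{i=1}^k a_i$. Concretely, if we set $S = \sum_{i=1}^k a_i$, then $(x_1,\dots,x_k) \in \mathbb{Z}_{>0}^k$ is a positive representation of $M$ if and only if $(x_1 - 1, \dots, x_k - 1) \in \mathbb{Z}_{\ge 0}^k$ is a (non-negative) representation of $M - S$. This map is clearly a bijection, so $M$ has exactly $j$ positive representations precisely when $M - S$ has exactly $j$ non-negative representations.

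From this the lemma should follow by a short case analysis. First I would observe that $g_j(a_1,\dots,a_k) = 0$ if and only if $f_j(a_1,\dots,a_k) = 0$: by definition $g_j = 0$ means no positive integer has exactly $j$ non-negative representations, which via the bijection is equivalent to no integer exceeding $S$ having exactly $j$ positive representations; one then needs to rule out the possibility that some integer in $(0, S]$ has exactly $j$ positive representations while no larger one does, but any positive representation of such an integer would force all $a_i \le M \le S$, and more to the point the shifting argument (à la Lemma~\ref{lemma}, adding $a_1$ repeatedly) shows that if any integer has exactly $j$ positive representations then arbitrarily... actually the cleanest route is: $f_j \ne 0 \iff$ some integer has exactly $j$ positive representations $\iff$ some integer ($=$ that one minus $S$, which is $\ge 0$; if it were negative there are no positive representations at all) has exactly $j$ non-negative representations, and one checks $j \ge 1$ forces this integer to be positive, hence $g_j \ne 0$; the case $j = 0$ is handled separately but is classical since $g_0$ is the ordinary Frobenius number and $f_0$ its positive analogue. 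In the nonzero case, $g_j + S$ has exactly $j$ positive representations (apply the bijection to the non-negative representations of $g_j$), so $f_j \ge g_j + S$; conversely $f_j - S \ge g_j - S \ge$... rather, $f_j - S$ is a non-negative integer with exactly $j$ non-negative representations, and any $n > f_j - S$ would give $n + S > f_j$ with exactly $j$ positive representations unless $n + S$ has a different count, so $f_j - S \le g_j$, and combined with the reverse inequality we get $f_j = g_j + S$.

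The main obstacle I anticipate is the bookkeeping around \emph{small} integers and the boundary $j = 0$: one must be careful that ``$f_j = 0$'' and ``$g_j = 0$'' are genuinely the sentinel values signalling non-existence rather than legitimate Frobenius numbers, and that the shift by $S$ does not accidentally push something below zero in a way that breaks the correspondence. In particular, when $j \ge 1$ and $f_j \ne 0$, I must confirm $f_j - S$ is not merely $\ge 0$ but actually such that it is the \emph{greatest} integer with exactly $j$ non-negative representations — this uses that the shift is an order-preserving bijection on the relevant ranges. None of these steps is deep, but stating them cleanly so the two displayed alternatives in the lemma come out exactly is where the care is needed; I would organize the write-up as (i) establish the shifting bijection, (ii) prove the equivalence of the two vanishing conditions, (iii) in the non-vanishing case derive the two inequalities $f_j \le g_j + S$ and $f_j \ge g_j + S$.
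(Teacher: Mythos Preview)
Your proposal is correct and follows essentially the same route as the paper: both arguments rest on the shift bijection $(x_1,\dots,x_k)\leftrightarrow(x_1-1,\dots,x_k-1)$ between positive representations of $M$ and non-negative representations of $M-\sum a_i$, use it to show $f_j=0\iff g_j=0$, and then in the nonzero case derive the two inequalities $f_j\ge g_j+\sum a_i$ and $f_j\le g_j+\sum a_i$ exactly as you describe. The paper's write-up is terser and does not pause over the boundary issues (small integers, the $j=0$ case) that you flag, but the underlying argument is the same.
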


\begin{proof}For ease, write $f_j$ for $f_j(a_1, \ldots, a_k)$, $g_j$ for $g_j(a_1, \ldots, a_k)$, and $K=\sum_{i=1}^k a_i$.\\ \\
Any representation $(y_1, \ldots, y_k)$ of $M$ gives a representation $(y_1+1, \ldots, y_k +1)$ of $M+K$. Moreover, adding or subtracting $K$ preserves the distinctness of representations because it adjusts every coefficient $y_i$ by $1$. Therefore if $M$ has $j$ representations, $M+K$ has at least $j$ positive representations. Likewise, every positive representation of $M+K$ gives a representation of $M$. Thus $f_j =0$ if and only if $g_j=0$. Assume now that $f_j$ and $g_j$ are both nonzero.\\ \\
Suppose that $f_j < g_j+ K$.  By definition, we can find exactly $j$ representations $(y_1, \ldots, y_k)$ for $g_j$ and $g_j$ has exactly $j$ representations if and only if $g_j + K$ has exactly $j$ \emph{positive} representations $(x_1, \ldots, x_k)$. However, by assumption $g_j + K >f_j$ and $g_j +K$ has exactly $j$ positive representations.  This contradicts the definition of $f_j$, hence $f_j \geq g_j+ K$.\\ \\
Suppose that $f_j > g_j+ K$.  By definition, we can find exactly $j$ positive representations $(x_1, \ldots, x_k)$ for $f_j$.  The same argument as above shows that $f_j - K$ has exactly $j$ representations in contradiction to the definition of $g_j$.  Thus $f_j \leq g_j+ K$.\end{proof}

\textbf{Proof of Theorem \ref{bigresult}:} Combine Theorem \ref{theorem} with Lemma \ref{lemma2}.

\begin{corollary}Let $a_1, a_2$ be coprime positive integers and let $m$ be a positive integer. Suppose that $g_j = g_j(a_1,a_2,ma_1a_2)\ne 0$. Then \begin{itemize}
\item $g_{j} = (j+1)a_1a_2 - a_1-a_2$ for $j< m+1$

\item $g_{m+1} = 0$ and

\item $g_{m+2} = (m+2)a_1a_2-a_1-a_2$.\end{itemize}\end{corollary}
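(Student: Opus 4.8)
The plan is to reduce everything to the two-variable case. Write $m a_1 a_2 = (m a_2) a_1 = (m a_1) a_2$, and observe that $\gcd(a_2, m a_1 a_2) = a_2$. Applying Theorem~\ref{bigresult} with $d = a_2$ to the triple $(a_1, a_2, m a_1 a_2)$ gives
\[
g_j(a_1, a_2, m a_1 a_2) = a_2 \cdot g_j\!\left(a_1, 1, m a_1\right) + (a_2 - 1) a_1,
\]
unless both sides vanish. So the first task is to compute $g_j(a_1, 1, m a_1)$. Because one of the generators is $1$, every nonnegative integer $M$ has representations, and a representation $(x_1, x_2, x_3)$ of $M$ is determined by the pair $(x_1, x_3)$ with $x_1, x_3 \ge 0$ and $a_1 x_1 + m a_1 x_3 \le M$ (then $x_2 = M - a_1 x_1 - m a_1 x_3$). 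Hence the number of representations of $M$ equals the number of lattice points $(x_1, x_3)$ in the triangle $\{x_1, x_3 \ge 0,\ x_1 + m x_3 \le M/a_1\}$; in particular this count is an unbounded nondecreasing function of $M$, so $g_j(a_1,1,ma_1) = 0$ for every $j$. That forces us into the ``or $=0$'' alternative of Theorem~\ref{bigresult}, giving $g_j(a_1,a_2,ma_1a_2) = 0$ whenever $g_j(a_1,1,ma_1)=0$ --- but the Corollary hypothesizes $g_j \ne 0$, so reducing via $d = a_2$ is a dead end. This tells me the right reduction is a \emph{different} one, and the honest approach is to analyze $g_j(a_1,a_2,ma_1a_2)$ directly.

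So instead I would count representations directly. A representation of $M$ by $(a_1, a_2, m a_1 a_2)$ is a triple $(x, y, z) \in \mathbb{Z}_{\ge 0}^3$ with $a_1 x + a_2 y + m a_1 a_2 z = M$. For fixed $z \ge 0$, the number of $(x, y)$ with $a_1 x + a_2 y = M - m a_1 a_2 z$ is, by the classical two-variable theory (Popoviciu's formula), either $0$ or grows by $1$ each time $M - m a_1 a_2 z$ increases by $a_1 a_2$; concretely, the count of nonnegative solutions to $a_1 x + a_2 y = N$ is $\lfloor N/(a_1 a_2)\rfloor + 1$ or $\lfloor N/(a_1 a_2)\rfloor$ according to whether $N \equiv$ something mod $a_1 a_2$ is representable or not, and it is $0$ exactly for the non-representable $N < a_1 a_2$ (there being a Frobenius number $a_1 a_2 - a_1 - a_2$). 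The key point: each increment of $z$ by $1$ costs exactly $m a_1 a_2$, i.e. $m$ ``units'' of $a_1 a_2$, so the total representation count of $M$ is a sum over $z = 0, 1, \dots$ of these two-variable counts, which is what makes $(j+1)a_1 a_2 - a_1 - a_2$ and the jump-by-$m$ structure appear.

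Concretely I would argue: for $M$ with $0 \le z_{\max} = \lfloor M/(m a_1 a_2)\rfloor$ small, the number of representations of $M$ is controlled by the $z = 0$ term plus lower-order contributions, and the largest $M$ attaining exactly $j$ representations for $j \le m$ is governed purely by the two-variable problem for $a_1, a_2$, which is why $g_j = (j+1) a_1 a_2 - a_1 - a_2$ — exactly the known value of the two-variable $j$-Frobenius number $g_j(a_1,a_2)$ (which one checks equals $(j+1)a_1a_2 - a_1 - a_2$ from Popoviciu). For $j = m+1$, every sufficiently large $M$ already has a contribution from the $z=1$ term, forcing the count strictly past $m+1$ for all large $M$, so no largest $M$ with exactly $m+1$ representations exists and $g_{m+1} = 0$. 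For $j = m+2$ the count resumes its climb and the extremal $M$ is again pinned down by a two-variable computation, now with the $z \in \{0,1\}$ band, yielding $(m+2) a_1 a_2 - a_1 - a_2$. The main obstacle is making the ``controlled by the two-variable count'' step precise: I need a clean bookkeeping lemma that for $M$ in the relevant range, $\#\{\text{reps of } M\} = \sum_{z=0}^{z_{\max}} r(M - m a_1 a_2 z)$ where $r(N) \in \{\lfloor N/(a_1 a_2)\rfloor, \lfloor N/(a_1 a_2)\rfloor + 1, \dots\}$ — more carefully, $r(N)$ differs from $N/(a_1a_2)$ by a bounded amount — and then a careful extremal analysis of where this sum first permanently exceeds each target value $j$. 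Handling the boundary cases (when $M - m a_1 a_2 z$ falls below the two-variable Frobenius number) is where the bulk of the case analysis lives, and is the step I expect to be most delicate.
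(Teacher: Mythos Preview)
Your proposal contains a concrete error that causes you to abandon the correct approach. You apply Theorem~\ref{bigresult} with $d=a_2$ and correctly reach $g_j(a_1,a_2,ma_1a_2)=a_2\cdot g_j(a_1,1,ma_1)+(a_2-1)a_1$ (when nonzero). You then assert that because the representation count for $(a_1,1,ma_1)$ is a nondecreasing unbounded function of $M$, it follows that $g_j(a_1,1,ma_1)=0$ for every $j$. This inference is backwards: if the count is nondecreasing and unbounded, then for each value $j$ actually attained the set $\{M:\#\text{reps}(M)=j\}$ is nonempty and bounded, hence has a maximum, so $g_j>0$ for those $j$. It is only the \emph{skipped} values of $j$ that give $g_j=0$. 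The reduction via $d=a_2$ is therefore not a dead end at all.

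The paper proceeds exactly from where you stopped: reorder $(a_1,1,ma_1)$ as $(1,a_1,ma_1)$ and apply Theorem~\ref{bigresult} a second time with $d=\gcd(a_1,ma_1)=a_1$, obtaining $g_j(a_1,1,ma_1)=a_1\,g_j(1,1,m)+(a_1-1)$. Combining the two reductions gives
\[
g_j(a_1,a_2,ma_1a_2)=a_1a_2\bigl(g_j(1,1,m)+2\bigr)-a_1-a_2,
\]
so everything comes down to computing $g_j(1,1,m)$ from the restricted partition function $p_{1,1,m}(k)$, whose sequence of values $1,2,\ldots,m,m+2,\ldots$ skips $m+1$. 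Your direct-counting fallback (summing two-variable counts $r(M-ma_1a_2 z)$ over $z$) is essentially this same computation with the two applications of Theorem~\ref{bigresult} unwound by hand; it can be made to work, but the double reduction absorbs precisely the ``delicate boundary cases'' you anticipated, and leaves a one-line partition computation in their place.
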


\begin{proof} Theorem \ref{bigresult} tells us that if $g_j(1,1,m) \ne 0$ then \begin{eqnarray*}
g_{j}(a_1, a_2, ma_1a_2) &= & a_2(g_{j}(a_1,1,ma_1)) + (a_2-1)a_1\\
& = & a_2 \left( a_1g_{j}(1,1,m) + (a_1 -1)1\right) + (a_2 -1)a_1\\
& = & a_1a_2 (g_{j}(1,1,m) + 2) -a_1 - a_2 .\\ 
\end{eqnarray*}

Following Beck and Robins in \cite{1}, we can use the values of the restricted partition function $p_{1,1,m}(k)$ to determine $g_{j}(1,1,m)$. Furthermore we can determine the relevant values with the Taylor series $\frac{1}{(1-t)^2(1-t^m)} = \sum_{k=0}^\infty p_{1,1,m}(k)t^k$. Now recall that for $k<m$, $p_{1,1,m}(k) = p_{1,1}(k) = k+1$ but $p_{1,1,m}(m) = m+2$ and for all $k>m$, $p_{1,1,m}(k)>m+2$.  Note that no number is represented $m+1$ times. Thus $g_{m+1}(1,1,m) = 0$, $g_j(1,1,m) = j-1$ for $j<m$ and $g_{m+2}(1,1,m) = m$.\end{proof}

{\bf Remark.} It is a consequence of the asymptotic in Nathanson \cite{3} that for a given tuple, there may be many $j$ for which $g_j =0$, so the ordering $g_0<g_1< \ldots$ may not hold. In the process of discovering these equalities, we noted the somewhat stranger occurrence of tuples where $0< g_{j+1} < g_j$.

Take for instance, the 3-tuple (3, 5, 8). The order $g_0<g_1<\ldots$ holds until $g_{14}=52$ and $g_{15}=51$. As should also be the case, the 3-tuple increased by a factor of $d=2$ creates the new ``dependent'' 3-tuple (3, 10, 16), which fails to hold order in the same position with $g_{14}=107$ and $g_{15}=105$. A few independent examples are as follows:\\

$g_{17}(2, 5, 7) = 43$ and $g_{18}(2, 5, 7) = 42$,

$g_{38}(2, 5, 17)=103$ and $g_{39}(2, 5, 17)=102$,

$g_{35}(4, 7, 19)=181$ and $g_{36}(4, 7, 19)=180$, and

$g_{38}(9, 11, 20)=376$ and $g_{39}(9, 11, 20)=369$.\\

We do not as of yet know a lower bound on $j$ for the above to occur. Indeed, in every case we have computed, if $g_0,g_1>0$ then $g_1>g_0$, but to date neither a proof or a counterexample has presented itself.

2000 {\it Mathematics Subject Classification}:
Primary 11D45, Secondary 45A05.

\noindent \emph{Keywords: } 
Counting solutions of Diophantine Equations, Linear Integral Equations

\end{document}